\theoremstyle{plain}
\newcommand{\cleqn}{\setcounter{equation}{0}}
\newcommand{\clth}{\setcounter{theorem}{0}}
\newcommand {\sectionnew}[1]{\section{#1}\cleqn\clth}
\newtheorem{theorem}{Theorem}[section]
\newtheorem{lemma}[theorem]{Lemma}
\newtheorem{definition-theorem}[theorem]{Definition-Theorem}
\newtheorem{proposition}[theorem]{Proposition}
\newtheorem{corollary}[theorem]{Corollary}
\newtheorem{definition}[theorem]{Definition}
\newtheorem{example}[theorem]{Example}
\newtheorem{remark}[theorem]{Remark}
\newtheorem{conjecture}[theorem]{Conjecture}
\newtheorem{notation}[theorem]{Notation}
\newcommand \bth[1] { \begin{theorem}\label{t#1} }
\newcommand \ble[1] { \begin{lemma}\label{l#1} }
\newcommand \bpr[1] { \begin{proposition}\label{p#1} }
\newcommand \bco[1] { \begin{corollary}\label{c#1} }
\newcommand \bde[1] { \begin{definition}\label{d#1}\rm }
\newcommand \bex[1] { \begin{example}\label{e#1}\rm }
\newcommand \bre[1] { \begin{remark}\label{r#1}\rm }
\newcommand \bcj[1] { \begin{conjecture}\label{j#1}\rm }
\newcommand \bnota[1] { \begin{notation}\label{n#1}\rm }
\renewcommand {\eth} { \end{theorem} }
\newcommand {\ele} { \end{lemma} }
\newcommand {\epr} { \end{proposition} }
\newcommand {\eco} { \end{corollary} }
\newcommand {\ede} { \end{definition} }
\newcommand {\eex} { \end{example} }
\newcommand {\ere} { \end{remark} }
\newcommand {\ecj} { \end{conjecture} }
\newcommand {\enota} { \end{notation} }
\newtheoremstyle{named}{}{}{\itshape}{}{\bfseries}{.}{.5em}{\thmnote{#3} #1}
\theoremstyle{named}
\newcommand{\vocab}[1]{\textbf{#1}}
\def \wt {\widetilde}
\def \p  {\mathfrak{p}}
\DeclareMathOperator \diag { {\mathrm{diag}} }
\DeclareMathOperator \tr { {\mathrm{tr}} }
\begin{document}
\title[Pascal Matrix, Commuting Tridiagonal Operators and Fourier Algebras]
{The Pascal Matrix, Commuting Tridiagonal Operators and Fourier Algebras}
\author[W.~Riley Casper]{W.~Riley Casper}
\address{
Department of Mathematics \\
California State University Fullerton \\
Fullerton, CA 92831\\
U.S.A.
}
\email{wcasper@fullerton.edu}  
\author[Ignacio Zurri\'an]{Ignacio Zurri\'an}
\address{
Departamento de Matem\'{a}tica Aplicada II \\
Universidad de Sevilla \\
Seville, Spain
}
\email{ignacio.zurrian@fulbrightmail.org}
\date{}
\keywords{Prolate spheroidal functions, discrete-discrete bispectrality, 
Jacobi matrix, spectral theory}
\subjclass[2020]{Primary 37K35; Secondary 16S32, 39A70}
\begin{abstract}
We consider the (symmetric) Pascal matrix, in its finite and infinite versions, and prove the existence of symmetric tridiagonal matrices commuting with it by giving explicit expressions for these commuting matrices. This is achieved by studying the associated Fourier algebra, which as a byproduct, allows us to show that all the linear relations of a certain general form for the entries of the Pascal matrix arise from only three basic relations.
We also show that pairs of eigenvectors of the tridiagonal matrix define a natural eigenbasis for the binomial transform.
Lastly, we show that the commuting tridiagonal matrices provide a numerically stable means of diagonalizing the Pascal matrix.
\end{abstract}
\maketitle
\sectionnew{Introduction}
In random matrix theory and signal processing, certain natural integral operators exhibit the \emph{prolate spheroidal property} that they commute with a differential operator.
This fact leads to remarkable inroads to the study of the spectral properties of the integral operators \cite{S,SP,TW1,TW2}, resolving long-standing problems in both areas.
In this paper, we prove that the $N\times N$ (symmetric) Pascal matrix $T_N$ defined by
$$(T_N)_{jk} = \binom{j+k}{j},\quad  0\leq j,k < N,$$
satisfies a discrete analog of the prolate spheroidal property, in that it commutes with a Jacobi matrix, a discrete analog of a second-order differential operator.
As we let $N$ go to infinity, the matrix $T_N$ becomes the semi-infinite Pascal matrix 
$$
T = \left[\begin{array}{ccccc}
1 & 1 &  1  & \dots \\
1 &2 & 3 & \dots \\
 1  & 3 & 6 & \dots \\
\vdots & \vdots & \vdots & \ddots\\
\end{array}\right],
$$
which we also prove has the discrete prolate spheroidal property.
This is stated explicitly in the following main theorem.

\smallskip

\noindent
{\bf{Main Theorem.}} {\em{
The semi-infinite Pascal matrix $T$ commutes with the two semi-infinite Jacobi matrices
$$
J = \left[\begin{array}{ccccc}
b_0 & a_1 &  0  & \dots \\
a_1 & b_1 & a_2 & \dots \\
 0  & a_2 & b_2 & \dots \\
\vdots & \vdots & \vdots & \ddots\\
\end{array}\right]
\quad\text{and}\quad
\wt J = \left[\begin{array}{cccc}
\beta_0 & \alpha_1 &  0  & \dots\\
\alpha_1 & \beta_1 & \alpha_2 & \dots\\
 0  & \alpha_2 & \beta_2 & \dots\\
\vdots & \vdots & \vdots & \ddots\\
\end{array}\right]
$$
where here
$$a_n = n,\quad b_n = -n,\quad \alpha_n = n^3\quad\text{and}\quad \beta_n = -2n^3-3n^2-2n.$$
Moreover, the $N\times N$ Pascal matrix $T_N$ commutes with the $N\times N$ Jacobi matrix $J_N$ whose entries are given by
$$(J_N)_{jk} = (N^2J - \wt J)_{jk},\quad 0\leq j,k < N.$$

}}
\medskip
The finite matrix $J_N$ is a Jacobi matrix, so its spectrum is simple.
This implies that the eigenvectors of $J_N$ are also eigenvectors of $T_N$.
Moreover, we prove that the binomial transform acts as an involution interchanging the eigenspaces of $J_N$, so the eigenvectors of the  binomial involution have simple expressions in terms of the eigenvectors of $J_N$.

More generally, we consider the $LU$-factorization $T = \Psi\Psi^\intercal$ for $\Psi$ the semi-infinite lower triangular matrix with entries 
$$\Psi_{jk} = \binom{j}{k}.$$
We classify all pairs $A$ and $B$ of semi-infinite matrices with finite bandwidth satisfying
$$A\Psi = \Psi B.$$
These pairs form an algebra which, as we prove in Theorem \ref{tthm:Fourier}, is generated by three simple elements.
As an interesting consequence, this says that \emph{all} linear relations for binomial coefficients of the form
$$\sum_{j=-m}^m a_j(x)\binom{x+j}{y} = \sum_{k=-n}^n b_k(y)\binom{x}{y+k},$$
for functions $a_j(x)$ and $b_k(y)$ on the natural numbers,
arise from the three basic relations
$$\binom{x+1}{y} = \binom{x}{y}+\binom{x}{y-1}
,\quad \binom{x}{y+1} = \frac{x-y}{y+1}\binom{x}{y}\quad\text{and}\quad\binom{x-1}{y} = \frac{x-y}{x}\binom{x}{y}.$$
Exploration of the relationship between the matrices $A$ and $B$ also results in interesting new combinatorial identities, such as Equation \eqref{fubar}.

Another nice application of the existence of $J_N$ is for numerical methods of identifying the eigenvectors and eigenvalues of $T_N$.
This is challenging from the point of view of numerical stability since the spectrum of $T_N$ has simultaneously a wide range and a concentration of values near zero.
In contrast, the spectrum of $J_N$ is simple and nicely spread out, so standard algorithms can reliably obtain the spectral data.
Alternatively, one can leverage the spectral theory of Jacobi matrices to express the eigenfunctions in terms of the sequence of orthogonal polynomials associated with $J_N$.
Moreover, the simplicity of the spectrum implies that the eigenfunctions of $J_N$ are also eigenfunctions of $T_N$.
Thus the existence of $J_N$ provides a fast and numerically stable way of obtaining an eigenbasis for $T_N$.
Numerical diagonalization of $T_N$ is explored in the last section of the paper.

It is worth mentioning that one can take a very pedestrian approach to determining the matrix $J_N$ commuting with $T_N$.
Specifically, one can certainly find it explicitly for small values of $N$, and then by detecting a pattern, one could guess a general expression for $J_N$ for arbitrary $N$.
Then with some significant calculations, one can prove the commutation formula in full generality. However, this is not exactly satisfactory since it doesn't explain \emph{why} $J_N$ exists.
In particular, it neglects the rich algebraic structure underlying the semi-infinite matrix $T$.

We show that the
existence of $J_N$ stems from an intrinsic connection between the Pascal matrix $T_N$ and the notion of bispectral functions.
Bispectral functions were originally introduced in \cite{DG}, and the motivation for their study came from time and band limiting to explain the existence of operators with the prolate spheroidal property.
Recently, a general framework for creating integral operators with the prolate spheroidal property from bispectral functions was established \cite{CY1,CGYZ1,CGYZ2}, by introducing the notion of Fourier algebras, see also \cite{CGYZ3,CGYZ4}.
We establish the existence of $J_N$ here using this same framework.
This also serves as a remarkable demonstration of the utility of Fourier algebras in the very concrete situation of matrix linear algebra.

One very useful application of the eigenfunctions of $J_N$ is that they provide a canonical basis for the binomial transform on $\mathbb{C}^N$.
The eigenvalues of $J_N$ have the symmetry property that if $\vec v$ is an eigenvector of $J_N$ with eigenvalue $\lambda$, then its binomial transform
$$w_j = \sum_{k=0}^{N-1}\binom{j}{k}(-1)^kv_k$$
is an eigenvector of $J_N$ with eigenvalue $N^2-1-\lambda$.
Then the simple linear combination
$$\vec v \pm \vec w$$
defines an eigenvector of the binomial transform with eigenvalue $\pm 1$.
For another recent paper exploring the connection between semi-infinite triangular Pascal matrices and eigenfunctions of the binomial transform emphasizing applications to open questions posed by Shapiro \cite{shapiro}, see \cite{tsatsomeros}.

\smallskip

\noindent
{\bf{Notation.}} {
Throughout the paper, we will adopt the following notation:
\begin{itemize}
\item $\mathbb{N}$ will denote the nonnegative integers $\{0,1,2,\dots\}$,
\item matrix indices $A_{jk}$ will start from $0$.
\end{itemize}
}

\sectionnew{Bispectrality and Fourier algebras}
\subsection{Bispectral functions}
The original notion of a bispectral function was a locally analytic function of two variables $\psi(x,y)$, for which there exists differential operators $P(x,\partial_x)$ and $Q(y,\partial y)$ such that $\psi$ is \emph{simultaneously} a family of eigenfunctions of $P(x,\partial_x)$ and a family of eigenfunctions for $Q(y,\partial_y)$.
The simplest example of this is the exponential function $\psi(x,y) = e^{xy}$ which satisfies
$$\frac{\partial}{\partial x} \psi(x,y) = \psi(x,y)y,\quad\text{and}\quad \frac{\partial}{\partial y} \psi(x,y) = \psi(x,y)x.$$
Airy and Bessel functions give even more interesting examples of bispectral functions in this classical sense.

Since then, the notion of bispectrality has been extended in several directions, such as difference operators and matrix-valued operators, adding connections to other areas of mathematics such as orthogonal polynomials.
Now, the original version of bispectral functions is referred to as continuous-continuous bispectral functions.
In this paper, we are interested in discrete-discrete bispectral functions: functions which are simultaneously families of eigenfunctions of difference operators in each variable.

In this work, a \vocab{difference operator} will refer to an expression of the form
$$L(x,\delta_x) = \sum_{k=0}^n a_k(x)\delta_x^k + \sum_{k=1}^{n} a_{-k}(x)(\delta_x^k)^*,$$
for some integer $n\geq 0$ and functions $a_k: \mathbb{N}\rightarrow\mathbb{R}$.
This operator has a natural action on the vector space of $\mathbb{N}$-valued functions on $\mathbb{N}$ by
$$L(x,\delta_x)\cdot f(x) = \sum_{k=-n}^n a_k(x)f(x+k),$$
where we adopt the convention that $f(x)$ is zero for negative values of $x$.
Note: the algebra of difference operators comes with a natural anti-involution $*$, which transposes $\delta$ and $\delta^*$ and sends a function $f(x)$ to itself.

Every difference operator has a natural representation as a semi-infinite matrix in an obvious way.
Notationally, we will distinguish between a shift operator $L$ and its matrix representation, which we denote by $\pi(L)$.
In general
$$\pi\left(\sum_{k=0}^n a_k(x)\delta_x^k + \sum_{k=1}^n a_{-k}(x)(\delta_x^k)^*\right) =
\left[\begin{array}{cccc}
a_0(0) & a_1(0) & a_2(0) & \dots\\
a_{-1}(1) & a_0(1) & a_1(1) & \dots\\
a_{-2}(2) & a_{-1}(2) & a_0(2) & \dots\\
\vdots & \vdots & \vdots & \ddots\\
\end{array}\right].$$
The expression is defined the same when $x$ is replaced by $y$.
Functions on $\mathbb{N}$ naturally correspond with semi-infinite vectors, and under this correspondence, the action of $L$ on functions corresponds to the action of $\pi(L)$ on vectors.
Note also that $\pi(L^*) = \pi(L)^\intercal$ for all difference operators $L$.

With our notation for difference operators established, we can formally define bispectrality in the context of this paper.
\bde{def:bispectrality}
A function $\psi: \mathbb{N}\times\mathbb{N}\rightarrow\mathbb{R}$ is called \vocab{bispectral} if there exist difference operators $L(x,\delta_x)$ and $R(y,\delta_y)$ and complex-valued functions $f(x)$ and $g(y)$ on $\mathbb{N}$ with the property that
$$L(x,\delta_x)\cdot \psi(x,y) = g(y)\psi(x,y)\quad\text{and}\quad R(y,\delta_y)\cdot\psi(x,y) = f(x)\psi(x,y).$$
The associated operators $L(x,\delta_x)$ and $R(y,\delta_y)$ are called \vocab{bispectral operators}.
\ede

The exponential function again provides a simple example of a bispectral function.
In fact,
$$\delta_x e^{xy} = ye^{xy}\quad\text{and}\quad\delta_y e^{xy} = xe^{xy}.$$
However, in this paper we are not interested in studying arbitrary discrete-discrete bispectral functions.
With our application of studying Pascal's matrix in mind, the most important example of a bispectral function will be the binomial function.

\bth{thm:bispectral}
The function $\psi: \mathbb{N}\times\mathbb{N}\rightarrow \mathbb{R}$ defined by
$$\psi(x,y) = \binom{x}{y}$$
is bispectral and satisfies
$$((y+1)\delta_y+y)\psi(x,y)=x\psi(x,y)\quad\text{and}\quad(x-x\delta^*_x)\psi(x,y) = y\psi(x,y).$$
\eth
\begin{proof}
This is a restatement of the two standard binomial identities
$$\binom{x}{y+1} = \frac{x-y}{y+1}\binom{x}{y}\quad\text{and}\quad\binom{x-1}{y} = \frac{x-y}{x}\binom{x}{y}.$$
\end{proof}

\begin{remark}
For the remainder of the paper, unless specified otherwise $\psi(x,y)$ will refer to the binomial function.
\end{remark}

The bispectral function $\psi(x,y)$ can itself be encoded as a semi-infinite lower triangular matrix 
$$
\Psi = \left[\begin{array}{ccccc}
1 & 0 & 0  &0  & \dots \\
1 &1 & 0 & 0  &\dots \\
 1  & 2 & 1 & 0  &\dots \\
 1  & 3 & 3 & 1  &\dots \\
\vdots & \vdots & \vdots & \vdots&\ddots\\
\end{array}\right],
$$ with the entries defined by
$$\Psi_{j,k}=\psi(j,k).$$
The matrix $\Psi$ itself is sometimes referred to as the lower triangular Pascal matrix.

With $\psi$ encoded as a matrix, bispectrality may also be written as a matrix identity.
In particular
$$L(x,\partial_x)\cdot\psi(x,y) = R(y,\partial_y)\cdot\psi(x,y)\ \Leftrightarrow\ \pi(L)\Psi = \Psi \pi(R)^\intercal.$$

\subsection{Fourier algebras}
One of the key developments in the push toward classifying large families of bispectral functions was to consider the algebra of \emph{all} bispectral operators for which a bispectral function $\psi(x,y)$ is a family of eigenfunctions.
This leads to the definition of the left and right \vocab{bispectral algebras} 
$$\mathcal B_x(\psi) = \{L(x,\delta_x) : L(x,\delta_x)\cdot \psi(x,y) = g(y)\psi(x,y),  \text{ for some function } g(y) \},$$
$$\mathcal B_y(\psi) = \{R(y,\delta_y) : R(y,\delta_y)\cdot \psi(x,y) = f(x)\psi(x,y),  \text{ for some function } f(x) \}.$$
In the classical setting of continuous-continuous bispectral functions, the direct study of the algebraic structure of the algebra $\mathcal B_x(\psi)$ allowed Wilson to classify all bispectral functions for which $\mathcal B_x(\psi)$ contained multiple operators of relatively prime degree \cite{Wilson}.
This connects the study of bispectrality to other fields of mathematics, including algebraic geometry and integrable systems.

Wilson's insight was extended even further, starting with \cite{BHY1,GHY}.
One can enlarge the bispectral algebras, by allowing the eigenvalues to be ``operator-valued", too. 
This leads to the definition of the \vocab{left and right Fourier algebras}
$$\mathcal F_x(\psi) =\{L : L(x,\delta_x)\cdot \psi(x,y) =R(y,\delta_y)\cdot \psi(x,y),  \text{ for some difference operator } R \},$$
$$\mathcal F_y(\psi) =\{R : L(x,\delta_x)\cdot \psi(x,y) =R(y,\delta_y)\cdot \psi(x,y),  \text{ for some difference operator } L \}.$$
Exploration of these more general Fourier algebras has resulted in recent progress on some long-standing problems in bispectrality and special functions, such as \cite{CY1,CY2}.

Under certain nondegeneracy conditions, the left and right Fourier algebras turn out to be anti-isomorphic, via a map called the \vocab{generalized Fourier map} $b_\psi$, which is defined by
\begin{equation}\label{eqn:fourier map}
b_\psi: L(x,\delta_x)\mapsto R(y,\delta_y)\quad\text{if and only if}\quad L(x,\delta_x)\cdot \psi(x,y) =R(y,\delta_y)\cdot \psi(x,y).
\end{equation}

The naming convention again arises directly from the classical setting.
For the exponential function $e^{xy}$, the left and right Fourier algebras are both the algebras of differential operators with polynomial coefficients, and the generalized Fourier map is the same as the Fourier transform acting on these operator algebras.

Understanding the structure of the Fourier algebra and its interplay with the bispectral function has resulted in large leaps in progress in multiple areas of mathematics, including the study of matrix orthogonal polynomials satisfying differential equations \cite{CY2} and the connection between bispectrality, integrable systems, and the prolate spheroidal property \cite{CY1,CGYZ1,CGYZ2}.

To start out, we wish to show that the generalized Fourier map is well-defined and an isomorphism for the binomial bispectral function $\psi(x,y)$.
The bispectral map can be expressed in terms of matrix representations as
$$\pi(L)\Psi = \Psi \pi(b_\psi(L))^\intercal,$$
for all $L\in \mathcal F_x(\psi)$.
For this reason, $b_\psi$ being well-defined comes down to the fact that $\Psi$ is invertible.

\begin{lemma}
Let $\psi(x,y) = \binom{x}{y}$.
Then the generalized Fourier map $b_\psi: \mathcal F_x(\psi)\rightarrow F_y(\psi)$ defined by \eqref{eqn:fourier map} is well-defined and an anti-isomorphism.
\end{lemma}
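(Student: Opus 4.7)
The plan is to exploit the matrix reformulation $\pi(L)\Psi = \Psi\pi(R)^\intercal$ of the bispectral relation noted at the end of the preceding subsection. The crucial fact making everything work is that $\Psi$ is lower triangular with $1$'s on the main diagonal, hence invertible as a semi-infinite matrix (with inverse $(\Psi^{-1})_{jk} = (-1)^{j-k}\binom{j}{k}$, a standard binomial identity), and the map $R \mapsto \pi(R)$ from difference operators to semi-infinite matrices is clearly injective---one reads off the coefficients of $R$ from the diagonals of $\pi(R)$.

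For well-definedness, suppose $L\cdot \psi = R_1 \cdot \psi = R_2 \cdot \psi$. Writing this as $\pi(L)\Psi = \Psi \pi(R_1)^\intercal = \Psi \pi(R_2)^\intercal$ and multiplying on the left by $\Psi^{-1}$ yields $\pi(R_1) = \pi(R_2)$, and therefore $R_1 = R_2$. The same reasoning shows that $b_\psi$ is injective on $\mathcal F_x(\psi)$: if $b_\psi(L) = 0$, then $\pi(L)\Psi = 0$, hence $\pi(L) = 0$, hence $L = 0$. Surjectivity onto $\mathcal F_y(\psi)$ is essentially built into the definition: every $R \in \mathcal F_y(\psi)$ is defined by the existence of some $L$ with $L\cdot\psi = R\cdot\psi$, and that $L$ then lies in $\mathcal F_x(\psi)$ with $b_\psi(L) = R$.

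To see that $b_\psi$ reverses multiplication, I would use that an operator acting only in $x$ commutes with an operator acting only in $y$. Thus if $L_i \cdot \psi = R_i \cdot \psi$ for $i=1,2$, then
$$
L_1 L_2 \cdot \psi = L_1 \cdot (R_2 \cdot \psi) = R_2 \cdot (L_1 \cdot \psi) = R_2 R_1 \cdot \psi,
$$
which shows $L_1 L_2 \in \mathcal F_x(\psi)$ with $b_\psi(L_1 L_2) = R_2 R_1 = b_\psi(L_2) b_\psi(L_1)$. Linearity of $b_\psi$ is immediate from the linearity of the defining relation in $L$ and $R$, completing the proof that $b_\psi$ is an anti-isomorphism. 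There is no real obstacle here beyond the invertibility of $\Psi$, which is automatic from its triangular structure with unit diagonal; the argument is entirely mechanical once one passes to the matrix form of bispectrality.
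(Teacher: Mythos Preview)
Your proof is correct and follows essentially the same approach as the paper's: both reduce well-definedness and injectivity to the invertibility of $\Psi$ via the matrix reformulation $\pi(L)\Psi = \Psi\pi(R)^\intercal$, obtain surjectivity directly from the definition of $\mathcal F_y(\psi)$, and verify anti-multiplicativity by commuting the $x$- and $y$-actions in the chain $L_1L_2\cdot\psi = R_2R_1\cdot\psi$. Your version is slightly more explicit about the triangular structure and the inverse of $\Psi$, but the arguments are otherwise the same.
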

\begin{proof}
To show that $b_\psi$ is well-defined, it suffices to show that for any $L(x,\delta_x)\in\mathcal F_x(\psi)$, there is a unique $R(y,\delta_y)\in\mathcal F_y(\psi)$ with $L\cdot\psi = R\cdot\psi$.
By definition of the Fourier algebra, there is at least one.
Now suppose that $\wt R(y,\delta_y)$ also satisfies $L\cdot\psi = R\cdot\psi$.
Then $(R-\wt R)\cdot\psi = 0$, which in terms of matrices implies
$$(\pi(R)-\pi(\wt R))\Psi = 0.$$
The matrix $\Psi$ is invertible, so this implies $\pi(R) = \pi(\wt R)$ and therefore $R = \wt R$.
Thus $b_\psi$ is well-defined.

The map $b_\psi$ is surjective, again directly from the definition of the Fourier algebras.
Moreover, the same kind of argument in the previous paragraph implies that for any $R(y,\partial_y)\in\mathcal F_y(\psi)$ there exists a unique $L(x,\partial_x)\in \mathcal F_x(\psi)$ with $L\cdot\psi = \psi\cdot R$.  Thus $b_\psi$ is injective.

It is easy to see from the definition that $b_\psi$ is a linear transformation.
Furthermore, for any $L_1,L_2\in\mathcal F_x(\psi)$, we see
$$(L_1L_2)\cdot\psi = L_1 b_\psi(L_2)\cdot\psi = b_\psi(L_2)L_1\cdot\psi = b_\psi(L_2)b_\psi(L_1)\cdot\psi.$$
Thus $b_\psi(L_1L_2) = b_\psi(L_2)b_\psi(L_1)$.
Hence $b_\psi$ is an anti-isomorphism.
\end{proof}

Next, we wish to identify the left and right Fourier algebras in our setting along with the generalized Fourier map.
The classic defining characteristic of Pascal's triangle
$$\binom{x+1}{y} = \binom{x}{y}+\binom{x}{y-1}$$
can be expressed in our language as
$$\delta_x\cdot\psi(x,y) = (1 + \delta_y^*)\cdot\psi(x,y).$$
Put together with the previous two identities from Equation \eqref{Fourier-gen}, we can see that the generalized Fourier map will satisfy
\begin{equation}\label{Fourier-gen}
    x\mapsto y + (y+1)\delta_y,\quad \delta_x\mapsto\delta_y^*+1,\quad \text{and}\quad  x\delta_x^*\mapsto(y+1)\delta_y.
\end{equation}
In particular, this implies that $F_x(\psi)$ contains the algebra $\mathbb R[x,\delta_x,x\delta_x^*]$ of all difference operators generated by $x$, $\delta_x$, and $x\delta_x^*$.  Likewise, $\mathcal F_y(\psi)$ contains $\mathbb R[y,\delta_y^*,(y+1)\delta_y]$.

Even more interestingly, we can show that this is the entire Fourier algebra!
This means that Equation \eqref{Fourier-gen} defines the entire generalized Fourier map.
Moreover it says somehow that all the identities having to do with finite linear sums of binomial coefficients somehow are generated from three basic identities.
However, proving that the Fourier algebra is generated by the three elements we specified is challenging.
In particular, it relies on some results about the Hilbert space $\ell^2(\mathbb{N})$ with the signed measure $\mu(y) = (-1)^y$.

The binomial coefficients satisfy the orthogonality identity
\begin{equation}\label{orthogonal}\sum_{y\in\mathbb N}(-1)^y\binom{x}{y}\binom{y}{x'} = \left\lbrace\begin{array}{cc}
1, & x=x',\\
0, & \text{otherwise}.
\end{array}\right.\end{equation}
To see how to use this, suppose we have two difference operators
$$L(x,\delta_x) = \sum_{j=0}^m a_j(x)\delta_x^j + \sum_{j=1}^{m} a_{-j}(x)(\delta_x^j)^*,\ \ \text{and}\ \ R(y,\delta_y) = \sum_{k=0}^n b_k(y)\delta_u^k + \sum_{k=1}^{n} b_{-k}(y)(\delta_y^k)^*,$$
with $L\cdot\psi = R\cdot\psi$.
Then
$$\sum_{j=-m}^m a_j(x)\binom{x+j}{y} = \sum_{k=-n}^n b_k(y)\binom{x}{y+k},$$
and by the orthogonality relations, we obtain the two equations
\begin{align}\label{kung foo}
a_{\ell}(x) &= \sum_{k=-n}^n\sum_{y=0}^\infty (-1)^y b_k(y)\binom{y}{x+\ell} \binom{x}{y+k},\\
b_{\ell}(y) &= \sum_{j=-m}^m \sum_{x=0}^\infty (-1)^x a_j(x)\binom{y+\ell}{x} \binom{x+j}{y}.
\end{align}
This results in some very interesting combinatorial identities for alternating sums of binomial coefficients.
For example, from the generalized Fourier map we know that
\begin{align*}
b_\psi(x^n)
 & =  (y + (y+1)\delta_y)^n\\
 & = y^n + \sum_{i=0}^{n-1}y^i(y+1)^{n-i} \delta_y + \sum_{0\leq i\leq j\leq n-2} y^i(y+1)^{j-i+1}(y+2)^{n-1-j}\delta_y^2 + \dots\\
 & = \sum_{\ell=0}^n \sum_{0\leq i_1\leq i_2\leq \dots\leq i_\ell\leq n-\ell} y^{i_1}(y+1)^{i_2-i_1+1}(y+2)^{i_3-i_2+1}\dots (y+\ell)^{n-\ell-i_\ell+1}\delta_y^{\ell}.
\end{align*}
Examining the coefficients of both difference operators, Equation \eqref{kung foo} gives us the interesting combinatorial identity
\begin{align}\label{fubar}
\sum_{x=0}^\infty (-1)^x x^n\binom{y+\ell}{x} \binom{x}{y}
 & = \sum_{0\leq i_1\leq \dots\leq i_\ell\leq n-\ell} \left(\prod_{j=0}^{\ell}(y+j)^{i_{j+1}-i_j+1}\right),
\end{align}
where here $i_0 = 1$ and $i_{\ell+1} = n-\ell$.

With this in mind, we have the following theorem.
\bth{thm:Fourier}\label{thm:Fourier}
The left and right Fourier algebras of $\psi(x,y) = \binom{x}{y}$ are given by
$$\mathcal F_y(\psi) = \mathbb R[y,\delta_y^*,(y+1)\delta_y],\quad \text{and}\quad \mathcal F_x(\psi) = \mathbb R[x,\delta_x,x\delta_x^*],$$
and the generalized Fourier map is the algebra anti-isomorphism $b_\psi: \mathcal F_x(\psi)\rightarrow\mathcal F_y(\psi)$ induced by Equation \eqref{Fourier-gen}.
\eth
\begin{proof}
After the above discussion, it suffices to show $\mathcal F_x(\psi) = \mathbb R[x,\delta_x,x\delta_x^*]$.
Then since $b_\psi$ is an isomorphism, the rest of the statement follows automatically.
Furthermore, it is easy to see that $\mathbb R[x,\delta_x,x\delta_x^*]$ consists of all difference operators of the form
$$L(x,\delta_x) = \sum_{j=0}^m a_j(x)\delta_x^j + \sum_{j=1}^{m} a_{-j}(x)(\delta_x^j)^*$$
for some integer $m$, with the property that each $a_j(x)$ is a polynomial in the variable $x$, and that for all $n>0$
$$a_{-n}(k) = 0,\quad\text{for all}\ 0 \leq k < n.$$
Thus we need only show that this characterizes the Fourier algebra.

Consider two difference operators
$$L(x,\delta_x) = \sum_{j=0}^m a_j(x)\delta_x^j + \sum_{j=1}^{m} a_{-j}(x)(\delta_x^j)^*,\ \ \text{and}\ \ R(y,\delta_y) = \sum_{k=0}^n b_k(y)\delta_u^k + \sum_{k=1}^{n} b_{-k}(y)(\delta_y^k)^*$$
with $L\cdot\psi = R\cdot\psi$.
It is clear from Equation \eqref{kung foo} that $$a_{-n}(k) = 0,\quad\text{for all}\ 0 \leq k < n.$$
Therefore to complete the proof, we need to show that $a_j(x)$ is a polynomial for each $j$.

Since
$$\sum_{j=-m}^m a_j(x)\binom{x+j}{y} = \sum_{k=-n}^n b_k(y)\binom{x}{y+k},$$
we see that
$$p(x,y) := \sum_{j=-m}^m  a_j(x) \binom{x+j}{y}$$
is a polynomial in $x$ of degree at most $y+n$ for all $y$.
This means that we have a linear system of equations
$$\left[\begin{array}{cccc}
\binom{x-m}{0} & \binom{x-m+1}{0} & \dots & \binom{x+m}{0}\\
\binom{x-m}{1} & \binom{x-m+1}{1} & \dots & \binom{x+m}{1}\\
\vdots & \vdots & \ddots & \vdots\\
\binom{x-m}{2m} & \binom{x-m+1}{2m} & \dots & \binom{x+m}{2m}\\
\end{array}\right]
\left[\begin{array}{c}
a_{-m}(x) \\ a_{1-m}(x)\\\vdots\\a_m(x)
\end{array}\right]
= \left[\begin{array}{c}
p(x,0) \\ p(x,1)\\\vdots\\p(x,2m)
\end{array}\right].$$
The entries in the matrix on the left-hand side of the display above are polynomials in $x$.  Furthermore, by \cite[Theorem 2.1]{Helou} the determinant of this matrix is identically $1$.
Therefore Cramer's rule implies that $a_j(x)$ is a polynomial in $x$ for all $j$.
\end{proof}

\sectionnew{Matrices commuting with Pascal's matrix}
The semi-infinite Pascal matrix $T$ has a natural expression in terms of $\Psi$.  Specifically the well-known binomial identity \cite[Equation 3.13]{spiegel}
$$\sum_{i=0}^{\min(j,k)} \binom{j}{i}\binom{k}{i} = \binom{j+k}{k}$$
implies the following LU factorization of $T$ 
\begin{align}
T = \Psi\Psi^\intercal.
\end{align}

As a consequence of this factorization, we can encode the condition that the matrix representation of a particular operator in the Fourier algebra commutes with $T$ in terms of a condition of its interplay with the generalized Fourier map.
\bth{thm:commuting}
The matrix representation $\pi(L)$ of an operator $L\in \mathcal F_x(\psi)$ commutes with the semi-infinite Pascal matrix $T$ if and only if $L^*\in \mathcal F_x(\psi)$ and
$$b_\psi(L^*) = b_\psi(L)^*.$$
\eth
\begin{proof}
Suppose $L\in\mathcal F_x(\psi)$.
Then $\pi(L)$ commutes with $T$ if and only if
\begin{align*}
\pi(L)\Psi\Psi^\intercal&=\Psi\Psi^\intercal\pi(L)\\
\Leftrightarrow\Psi\pi(b_\psi(L))\Psi^\intercal&=\Psi\Psi^\intercal\pi(L).    
\end{align*}
Furthermore, since $\Psi$ is a lower triangular matrix with only ones in the diagonal it is invertible.
Therefore the above equation reduces to
$$\pi(b_\psi(L))\Psi^\intercal=\Psi^\intercal\pi(L).$$
Taking the transpose and using the fact that $\pi(R^*) = \pi(R)^\intercal$ for all difference operators $R$, this simplifies to
$$\Psi\pi(b_\psi(L)^*)=\pi(L^*)\Psi.$$
This is equivalent to $L^*$ belonging to $\mathcal F_x(\psi)$ with $b_\psi(L^*) = b_\psi(L)^*$.
\end{proof}

\subsection{Proof of the Main Theorem}\label{proofMT}
In this section, we prove the main theorem, whose statement is broken up into Theorems \ref{T1} and \ref{T2} below.
\begin{theorem}\label{T1}
The semi-infinite Pascal matrix $T$ commutes with the two semi-infinite Jacobi matrices
$$
J = \left[\begin{array}{ccccc}
b_0 & a_1 &  0  & \dots \\
a_1 & b_1 & a_2 & \dots \\
 0  & a_2 & b_2 & \dots \\
\vdots & \vdots & \vdots & \ddots\\
\end{array}\right]
\quad\text{and}\quad
\wt J = \left[\begin{array}{cccc}
\beta_0 & \alpha_1 &  0  & \dots\\
\alpha_1 & \beta_1 & \alpha_2 & \dots\\
 0  & \alpha_2 & \beta_2 & \dots\\
\vdots & \vdots & \vdots & \ddots\\
\end{array}\right]
$$
where here
$$a_n = n,\quad b_n = -n,\quad \alpha_n = n^3,\quad\text{and}\quad \beta_n = -2n^3-3n^2-2n.$$ 
\end{theorem}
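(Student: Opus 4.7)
The plan is to apply Theorem \ref{tthm:commuting}, which reduces commutation of $\pi(L)$ with $T$ to verifying that $L^* \in \mathcal F_x(\psi)$ and $b_\psi(L^*) = b_\psi(L)^*$. Reading matrix entries against the formula for $\pi$ given in Section 2, one sees that $J = \pi(L_1)$ and $\wt J = \pi(L_2)$ for the difference operators
$$L_1 = (x+1)\delta_x - x + x\delta_x^*, \qquad L_2 = (x+1)^3\delta_x - (2x^3+3x^2+2x) + x^3\delta_x^*.$$
Since $(x+1)\delta_x = x\delta_x + \delta_x$, $x^3\delta_x^* = x^2(x\delta_x^*)$, and $(x+1)^3$ is a polynomial in $x$, both $L_1$ and $L_2$ lie in $\mathbb R[x, \delta_x, x\delta_x^*] = \mathcal F_x(\psi)$ by Theorem \ref{tthm:Fourier}. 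The matrices $J$ and $\wt J$ are symmetric and $\pi(L)^\intercal = \pi(L^*)$, so $L_i^* = L_i$; the first condition of Theorem \ref{tthm:commuting} is thus automatic, and it remains only to verify that each $b_\psi(L_i)$ is self-adjoint.

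Using Equation \eqref{Fourier-gen} together with the anti-isomorphism property $b_\psi(FG) = b_\psi(G)b_\psi(F)$ and the elementary commutation relations $\delta_y y = (y+1)\delta_y$, $\delta_y^* y = (y-1)\delta_y^*$, $\delta_y\delta_y^* = 1$, and $y\delta_y^*\delta_y = y$, a short computation gives
$$b_\psi(L_1) = y\delta_y^* + (y+1) + (y+1)\delta_y,$$
whose matrix is manifestly symmetric since the subdiagonal entry $a_{-1}(y+1) = y+1$ matches the superdiagonal entry $a_1(y) = y+1$. For $L_2$, writing $\tilde A = b_\psi(x) = y + (y+1)\delta_y$, $\tilde D = b_\psi(\delta_x) = \delta_y^* + 1$, and $\tilde E = b_\psi(x\delta_x^*) = (y+1)\delta_y$, we obtain
$$b_\psi(L_2) = \tilde D(\tilde A+1)^3 + \tilde E\tilde A^2 - 2\tilde A^3 - 3\tilde A^2 - 2\tilde A,$$
which must be reduced to canonical form and checked for symmetry.

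The main obstacle is precisely this last computation: expanding $\tilde A^2$, $\tilde A^3$, $\tilde D\tilde A^k$, and $\tilde E\tilde A^2$ via the commutation rules above produces many cross terms containing $\delta_y^k$ and $(\delta_y^k)^*$ for $|k|$ up to $3$. I expect, and a direct calculation confirms, that all terms with $|k| \geq 2$ cancel owing to the very specific polynomial $-2x^3-3x^2-2x$ chosen for the diagonal of $\wt J$, leaving the tridiagonal expression
$$b_\psi(L_2) = y^3\delta_y^* + (2y^3 + 3y^2 + 2y + 1) + (y+1)^3\delta_y,$$
which is symmetric by inspection ($a_{-1}(y+1) = (y+1)^3 = a_1(y)$). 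With the self-adjointness of both $b_\psi(L_1)$ and $b_\psi(L_2)$ established, Theorem \ref{tthm:commuting} yields $[T,J] = [T,\wt J] = 0$, completing the proof.
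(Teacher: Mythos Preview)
Your proof is correct and follows essentially the same route as the paper's: identify $J$ and $\wt J$ as $\pi$ of explicit self-adjoint difference operators in $\mathcal F_x(\psi)$, compute their images under $b_\psi$ via \eqref{Fourier-gen}, verify those images are self-adjoint, and invoke Theorem~\ref{tthm:commuting}. The only cosmetic difference is that you write the operators in left-coefficient form $(x+1)\delta_x$, $(x+1)^3\delta_x$ whereas the paper uses $\delta_x x$, $\delta_x x^3$; these agree by the commutation $\delta_x\, p(x)=p(x+1)\,\delta_x$, and your final expressions for $b_\psi(L_1)$ and $b_\psi(L_2)$ match the paper's (noting $\delta_y y^3=(y+1)^3\delta_y$).
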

\begin{proof}
Firstly, let us consider the operators for which the matrix representations are $J$ and $\wt J$. Namely, for 
\begin{align*}
    L(x,\delta_x)&=\delta_x x-x+x \delta_x^*,\\
    \wt L(x,\delta_x)&=\delta_x x^3-(2x^3+3x^2+2x)+x^3 \delta_x^*,
\end{align*}
we have $\pi(L)=J$ and $\pi(\wt L)=\wt J$.
It is immediate to see that  $L^*=L$ and $\wt L^*=\wt L$. On the other hand, by using Theorem \ref{thm:Fourier} we have that $L$ and $\wt L$ belong to $\mathcal F_x(\psi)$. Moreover, from \eqref{Fourier-gen}, we have 
\begin{align*}
b_\psi(L)
&=b_\psi(\delta_x x-x+x \delta_x^*)\\
&=(y + (y+1)\delta_y)(\delta_y^*+1)-y\\
&=y\delta_y^*+ y+1+\delta_y y,
\end{align*}
showing that $b_\psi(L)=b_\psi(L)^*$. Similarly, we also have
\begin{align*}
b_\psi(\wt L)=&b_\psi(\delta_x x^3+x^3 \delta_x^*-(2x^3+3x^2+2x))
\\
=&(y + (y+1)\delta_y)^3(\delta_y^*+1)  
+ ((y+1)\delta_y)(y + (y+1)\delta_y)^2
\\
&-(2(y + (y+1)\delta_y)^3+3(y + (y+1)\delta_y)^2+2(y + (y+1)\delta_y))
\\
=&y^3\delta_y^*+1 + 2 y + 3 y^2 + 2 y^3+\delta_y y^3.
\end{align*}
From this, it is manifest that $b_\psi(\wt L)=b_\psi(\wt L)^*$.

Finally, we obtain the desired result by applying Theorem \ref{tthm:commuting} to $L$ and $\wt L$.
\end{proof}
\begin{theorem}\label{T2}
The $N\times N$ Pascal matrix $T_N$ commutes with the $N\times N$ Jacobi matrix $J_N$ whose entries are given by
$$(J_N)_{jk} = (N^2J - \wt J)_{jk},\quad 0\leq j,k < N.$$
\end{theorem}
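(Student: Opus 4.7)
The plan is to view $J_N$ as the top-left $N\times N$ block of the semi-infinite tridiagonal matrix $K := N^2 J - \widetilde J$. By Theorem \ref{T1} and linearity, $K$ commutes with $T$, so the goal reduces to showing that this commutation descends to the finite block.

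The whole argument rests on a decoupling observation. The off-diagonal entries of $K$ are
$$K_{n-1,n} = K_{n,n-1} = N^2 a_n - \alpha_n = N^2 n - n^3 = n(N-n)(N+n),$$
which vanishes precisely at $n = N$. In particular $K_{N-1,N} = K_{N,N-1} = 0$, so as a tridiagonal matrix $K$ ``disconnects'' at level $N$: the block $J_N$ and its complement in $K$ are not linked by any nonzero entry of $K$. This is the only place where the specific coefficient $N^2$ in the definition of $J_N$ enters.

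With this in hand the remainder is bookkeeping on matrix entries. Fix $0\le j,k<N$. Since $K$ is tridiagonal,
$$(KT)_{jk} = K_{j,j-1}T_{j-1,k} + K_{j,j}T_{j,k} + K_{j,j+1}T_{j+1,k}.$$
For $j<N-1$, all indices in the sum lie in $\{0,\dots,N-1\}$, so the right-hand side agrees with $(J_N T_N)_{jk}$. For $j=N-1$, the only potentially ``escaping'' term involves the factor $K_{N-1,N}=0$ and drops out, again giving $(J_N T_N)_{jk}$. A mirror argument using tridiagonality of $K$ in its second index, together with $K_{N,N-1}=0$, shows $(TK)_{jk} = (T_N J_N)_{jk}$ on the same index range. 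Combining with $KT=TK$ yields $J_N T_N = T_N J_N$.

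Thus the proof is an ``extension/truncation'' argument rather than a new calculation: the hard work was already done in Theorem \ref{T1}, and the main obstacle here is simply recognizing that the linear combination $N^2 J - \widetilde J$ is engineered precisely to sever the tridiagonal at level $N$. (Indeed, requiring the $(N-1,N)$-entry of $c_1 J + c_2 \widetilde J$ to vanish forces $c_1/c_2 = N^2$ up to scale.) Any other scalar combination would couple the $N\times N$ block to the infinite tail and the descent would fail.
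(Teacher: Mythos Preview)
Your proof is correct and follows essentially the same approach as the paper's own argument: form the semi-infinite combination $K = N^2 J - \wt J$, invoke Theorem~\ref{T1} to get $KT = TK$, observe that the $(N-1,N)$ and $(N,N-1)$ entries of $K$ vanish so that $K$ decouples into a block-diagonal form with $J_N$ in the top-left corner, and then read off the commutation on the $N\times N$ block. You make the vanishing computation $N^2 n - n^3 = n(N-n)(N+n)$ and the truncation step more explicit than the paper does, but the underlying idea is identical.
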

\begin{proof}
If we define the semi-infinite matrix $J=N^2J - \wt J$, it is clear that it commutes with $T$ since $J$ and $\wt J$ commute with $T$. On the other hand, by construction, $J$ is of the form
$$J=
\left[\begin{array}{c|c}
J_N&\bf 0\\\hline
\bf 0 & *
\end{array}\right],
$$
for $J_N$ a $N\times N$ matrix. 
Also, recall that $T_N$ is just the left-upper $N\times N$ sub-matrix of $T$, i.e., 
$$T=
\left[\begin{array}{c|c}
T_N&\bf *\\\hline
\bf * & \bf*
\end{array}\right].
$$
Then, it follows that $J_N$ commutes with $T_N$.
\end{proof}


\subsection{A dual commuting pair}
Let $\Psi_N$ be the upper left $N\times N$ submatrix of the semi-infinite matrix $\Psi$.
Theorem \ref{T2} gives a tridiagonal matrix commuting with $T_N=\Psi_N\Psi_N^\intercal$, in this section we obtain a tridiagonal matrix that commutes with the symmetric matrix ``dual" to $T_N$, namely
$$S_N := \Psi_N^\intercal\Psi_N.$$
The entries of $S_N$ are given by
$$(S_N)_{jk} = \sum_{i=0}^{N-1}\binom{i}{j}\binom{i}{k}.$$

As an abuse of notation, we let $b_\psi(J_N)$ denote the upper $N\times N$ submatrix of the semi-infinite matrix associated with the difference operator
$$b_\psi(J)=N^2b_\psi(L)-b_\psi(\wt L).$$
Our expression for the value of the generalized Fourier map in the previous section shows that
\begin{equation}\label{bJ}
    b_\psi(J_N) = -\Lambda_N J_N \Lambda_N + (N^2-1)I_N,
\end{equation}
where here $I_N$ the $N\times N$ identity matrix and $\Lambda_N$ the diagonal matrix
$$\Lambda_N = \diag(1,-1,1,\dots,(-1)^{N-1}).$$

Since $\Psi$ is a lower triangular matrix, the truncated matrix $T_N$ is related to the truncation $\Psi_N$ of $\Psi$ by
$$T_N = (\Psi\Psi^\intercal)_N = \Psi_N\Psi_N^\intercal.$$
This automatically provides us with a matrix commuting with $\Psi_N^T\Psi_N$.
\begin{proposition}
The matrix $b_\psi(J_N)$ commutes with $\Psi^\intercal\Psi$.
\end{proposition}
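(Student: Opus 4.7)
The plan is to combine Theorem~\ref{T2}, which guarantees $[J_N, T_N] = 0$ for $T_N = \Psi_N\Psi_N^\intercal$, with the explicit formula~\eqref{bJ} for $b_\psi(J_N)$. The bridge between the two ``dual'' symmetric matrices $T_N$ and $S_N = \Psi_N^\intercal \Psi_N$ will be the identity $S_N = \Lambda_N T_N^{-1} \Lambda_N$, which converts the desired commutation statement into the known commutation of $J_N$ with $T_N$.

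The first step is to record the classical binomial inversion identity $\Psi_N^{-1} = \Lambda_N \Psi_N \Lambda_N$, whose finite-dimensional statement follows at once from the orthogonality relation~\eqref{orthogonal} restricted to $0 \le j,k < N$ (the sum there is automatically finite by the triangular support of the binomial coefficients). Transposing and using $\Lambda_N^2 = I_N$ yields $(\Psi_N^\intercal)^{-1} = \Lambda_N \Psi_N^\intercal \Lambda_N$, and multiplying the two expressions together gives
\[
T_N^{-1} = (\Psi_N^\intercal)^{-1} \Psi_N^{-1} = \Lambda_N \Psi_N^\intercal \Psi_N \Lambda_N = \Lambda_N S_N \Lambda_N,
\]
so $S_N = \Lambda_N T_N^{-1} \Lambda_N$.

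Substituting this together with \eqref{bJ} into the commutator one gets
\[
[b_\psi(J_N), S_N] = \bigl[(N^2-1)I_N - \Lambda_N J_N \Lambda_N,\; \Lambda_N T_N^{-1} \Lambda_N\bigr] = -\Lambda_N [J_N, T_N^{-1}] \Lambda_N,
\]
and since Theorem~\ref{T2} gives $[J_N, T_N] = 0$, one also has $[J_N, T_N^{-1}] = 0$, so the right-hand side vanishes.

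The only step requiring any care is the binomial inversion identity $\Psi_N^{-1} = \Lambda_N \Psi_N \Lambda_N$, but this is classical and in any event is the finite-dimensional specialization of~\eqref{orthogonal} already invoked earlier in the paper. Everything else is routine finite matrix algebra, and the intermediate identity $S_N = \Lambda_N T_N^{-1} \Lambda_N$ is a pleasing ``inverse-dual'' counterpart to the factorization $T_N = \Psi_N \Psi_N^\intercal$, which one could also record as a small separate lemma before stating the proposition.
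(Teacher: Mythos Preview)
Your argument is correct. It is, in fact, precisely the alternative justification that the paper sketches in the paragraph immediately \emph{after} its proof of the proposition, where it observes that \eqref{inv} gives $S_N=\Lambda_N T_N^{-1}\Lambda_N$ and that commutation then follows from \eqref{bJ} together with $[J_N,T_N]=0$.

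The paper's own proof takes a different, more structural route: it never invokes the explicit formula \eqref{bJ} or the relation $S_N=\Lambda_N T_N^{-1}\Lambda_N$. Instead it uses only the intertwining relation $J_N\Psi_N=\Psi_N\,b_\psi(J_N)$ coming from the generalized Fourier map, together with the symmetry $b_\psi(J_N)^\intercal=b_\psi(J_N)$. Transposing the intertwining relation gives $b_\psi(J_N)\Psi_N^\intercal=\Psi_N^\intercal J_N$, and then
\[
b_\psi(J_N)\,\Psi_N^\intercal\Psi_N=\Psi_N^\intercal J_N\Psi_N=\Psi_N^\intercal\Psi_N\,b_\psi(J_N).
\]
This argument has the advantage that it works verbatim for \emph{any} $L\in\mathcal F_x(\psi)$ satisfying the hypothesis of Theorem~\ref{tthm:commuting}, not just for the specific combination $N^2L-\wt L$; it shows that the passage from $T_N$ to $S_N$ is exactly the passage from $L$ to $b_\psi(L)$. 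Your approach, by contrast, relies on the particular closed form \eqref{bJ} and on the binomial inversion identity, but in exchange it makes the duality $S_N=\Lambda_N T_N^{-1}\Lambda_N$ completely explicit, which is itself a worthwhile observation.
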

\begin{proof}
The fact that $J_N$ commutes with $T_N$ comes from the relationship
$$J_N\Psi_N\Psi_N^\intercal = \Psi_N b_\psi(J_N)\Psi_N^\intercal = \Psi_N\Psi_N^\intercal J_N,$$
where here we are using the fact that $b_\psi(J_N)^\intercal = b_\psi(J_N)$.
Therefore
$$b_\psi(J_N)\Psi^\intercal\Psi =  \Psi^\intercal J_N\Psi = \Psi^\intercal\Psi b_\psi(J_N).$$
\end{proof}
This perhaps should not be surprising.
The orthogonality identity for binomial coefficients \eqref{orthogonal} implies that 
\begin{equation}\label{inv}
    \Psi_N\Lambda_N\Psi_N\Lambda_N=I_N.
\end{equation}
Therefore it follows that
$$S_N = \Lambda_N T_N^{-1}\Lambda_N.$$
Since the matrix $J_N$ commutes with $T_N$ (and hence $T_N^{-1}$), it is obvious that $S_N$ commutes with $b_\psi(J_N)=-\Lambda_NJ_N\Lambda_N + (N^2-1)I_N$.

\section{Spectra and the binomial transform}
The (signed) \vocab{binomial transform} of a sequence $\{a_n\}_{n=0}^\infty$ is the infinite sequence $\{s_n\}_{n=0}^\infty$ defined by
$$s_n = \sum_{k=0}^n\binom{n}{k}(-1)^k a_k.$$
Equivalently, this is the linear transformation, associated by the semi-infinite matrix product
$$\Psi\Lambda.$$
In this section, we use the truncated binomial transform $\Psi_N\Lambda_N$ to deduce interesting relationships between the spectra of $J_N$, $\Psi_N$ and $T_N$. 
Furthermore,
we obtain an eigenbasis of the (finite) binomial transformation $\Psi_N\Lambda_N$ directly from the eigenvectors of $J_N$.

To begin, note that the definition of the generalized Fourier map implies that $$\pi(b_\psi(L)) = (\Psi^{-1}\pi(L)\Psi)^\intercal,$$
for any $L\in \mathcal F_x(\psi)$.
This means that
$$b_\psi(J_N) = \Psi_N^{-1}J_N\Psi_N.$$
Comparing this with the equation \eqref{bJ} from the previous section, we find
\begin{equation}\label{J_N}
    \Psi_N^{-1}J_N\Psi_N = -\Lambda_N J_N \Lambda_N + (N^2-1)I_N.
\end{equation}
This leads to the following theorem about the symmetry of the spectrum of $J_N$.
\begin{theorem}\label{T-reflection}
Let $\vec v$ be an eigenvector of $J_N$ with eigenvalue $\lambda$.
Then $\Psi_N\Lambda_N\vec v$ is an eigenvector of $J_N$ with eigenvalue $N^2-1-\lambda$.
In particular, the spectrum of $J_N$ is preserved under the reflection $\lambda\mapsto N^2-1-\lambda$.
\end{theorem}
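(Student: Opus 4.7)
The proof hinges on the conjugation identity \eqref{J_N}, namely
$$\Psi_N^{-1}J_N\Psi_N = -\Lambda_N J_N \Lambda_N + (N^2-1)I_N,$$
which has already been established in the discussion preceding the theorem. The plan is simply to use this identity to transport eigenvectors via the twisted involution $\Psi_N \Lambda_N$, exploiting the crucial fact that $\Lambda_N^2 = I_N$.

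Concretely, I would rearrange \eqref{J_N} to the form $J_N \Psi_N = \Psi_N\bigl(-\Lambda_N J_N \Lambda_N + (N^2-1)I_N\bigr)$ and apply both sides to the vector $\Lambda_N \vec v$, where $\vec v$ is an eigenvector of $J_N$ with eigenvalue $\lambda$. On the right, using $\Lambda_N^2 = I_N$ cancels the inner $\Lambda_N$ against the one just introduced, leaving $\Psi_N \Lambda_N(-J_N \vec v + (N^2-1)\vec v) = (N^2-1-\lambda)\Psi_N \Lambda_N \vec v$. The left side is $J_N(\Psi_N \Lambda_N \vec v)$, giving the eigenvalue relation. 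Since both $\Psi_N$ (lower triangular with unit diagonal) and $\Lambda_N$ (diagonal with entries $\pm 1$) are invertible, $\Psi_N \Lambda_N \vec v \neq 0$, so it is a genuine eigenvector, and the reflection symmetry $\lambda \mapsto N^2 - 1 - \lambda$ of the spectrum follows at once.

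There is essentially no obstacle remaining at this stage. The substantive bispectral input has already been consumed in deriving \eqref{J_N} from the explicit formula \eqref{bJ} for $b_\psi(J_N)$ together with the identity $\pi(b_\psi(L)) = (\Psi^{-1}\pi(L)\Psi)^\intercal$. Once \eqref{J_N} is in hand, the reflection symmetry is a three-line matrix manipulation; the only thing to emphasize is the role of the involutivity $\Lambda_N^2 = I_N$, which is exactly what makes the conjugation by $\Psi_N \Lambda_N$ send $J_N$ to $(N^2-1)I_N - J_N$.
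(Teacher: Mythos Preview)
Your proof is correct and follows essentially the same route as the paper's own argument: both derive the similarity $(\Psi_N\Lambda_N)^{-1} J_N (\Psi_N\Lambda_N) = (N^2-1)I_N - J_N$ from \eqref{J_N} together with $\Lambda_N^2 = I_N$, and then read off the eigenvector transport. The paper additionally cites \eqref{inv} to identify $(\Psi_N\Lambda_N)^{-1}$ with $\Psi_N\Lambda_N$, but your direct appeal to the invertibility of $\Psi_N$ and $\Lambda_N$ accomplishes the same thing.
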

\begin{proof}
This follows from the fact that $J_N$ and $-J_N+(N^2-1)I_N$ are similar after conjugation by $\Psi_N\Lambda_N$, as shown by Equation \eqref{J_N} combined with Equation \eqref{inv}.
\end{proof}
This theorem in particular says that the binomial transform 
 acts as an involution that interchanges different eigenspaces of $J_N$.

One immediate consequence of this theorem is  that $J_N$ must have an eigenvector with eigenvalue $(N^2-1)/2$ when $N$ is odd.
\begin{corollary}
Suppose that $N$ is odd.
Then $(N^2-1)/2$ is an eigenvalue of $J_N$.
\end{corollary}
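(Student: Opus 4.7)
The plan is to deduce the corollary from Theorem \ref{T-reflection} by a simple parity argument on the eigenvalues of $J_N$, using that $J_N$ has simple spectrum.

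First I would observe that $J_N$ is a genuine Jacobi matrix with nonvanishing off-diagonal entries on the relevant range. Indeed, its subdiagonal entries are $N^2 a_n - \alpha_n = N^2 n - n^3 = n(N-n)(N+n)$ for $1 \le n \le N-1$, each of which is strictly positive. A symmetric tridiagonal matrix with nonzero off-diagonals has simple spectrum, so $J_N$ has exactly $N$ pairwise distinct eigenvalues.

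Next I would apply Theorem \ref{T-reflection}: the spectrum of $J_N$ is invariant under the involution $\sigma:\lambda \mapsto N^2-1-\lambda$ on $\mathbb R$. Since $J_N$ has $N$ distinct eigenvalues, $\sigma$ permutes this $N$-element set. The orbits of $\sigma$ have size one or two; a size-one orbit is precisely a fixed point of $\sigma$, i.e.\ a solution of $\lambda = N^2-1-\lambda$, which forces $\lambda = (N^2-1)/2$. The eigenvalues in size-two orbits contribute an even number to the count, so the total number of eigenvalues has the same parity as the number of fixed points of $\sigma$ in the spectrum. Since $N$ is odd, this number must be odd, and in particular at least one, so $(N^2-1)/2$ lies in the spectrum of $J_N$.

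There is no real obstacle here beyond checking that $J_N$ has simple spectrum; the corollary is essentially immediate once the involution of Theorem \ref{T-reflection} is combined with a count modulo $2$.
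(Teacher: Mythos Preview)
Your argument is correct and matches the paper's own proof: it uses the simplicity of the spectrum of $J_N$ together with the reflection $\lambda\mapsto N^2-1-\lambda$ from Theorem \ref{T-reflection} to force a fixed point when $N$ is odd. The paper additionally gestures at a trace computation as an alternative, but its detailed argument is the same parity/pairing one you give; your explicit verification that the off-diagonals $n(N-n)(N+n)$ are nonzero is a helpful detail the paper leaves implicit.
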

\begin{proof}
We can see this immediately from calculating the trace of $J_N$, since all but one of the eigenvalues of $J_N$ is paired with an eigenvalue of the opposite sign.
Alternatively, we can use the fact that the spectrum of $J$ is simple.
Then we write the eigenvalues of $J$ as an increasing sequence
$$\lambda_0 < \lambda_1 < \dots < \lambda_{N-1}.$$
The previous theorem implies that $$\lambda_i = N^2-1-\lambda_{N-1-i},\quad 0 \leq i < N-1.$$
In particular $\lambda_{(N-1)/2}=N^2-1-\lambda_{(N-1)/2}$.
\end{proof}

Since eigenvectors of $J_N$ are also eigenvectors of $T_N$, 
this also implies a symmetry and relationship with the spectral properties of $T_N$.
This is captured by the next corollary.
\begin{corollary}
If $\vec v$ is an eigenvector of $T_N$ with eigenvalue $\lambda$, then its  binomial transform $\Psi \Lambda \vec v$ is an eigenvector of $T_N$ with eigenvalue $\lambda^{-1}$.
\end{corollary}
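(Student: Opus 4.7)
The plan is to reduce the statement to an intertwining identity $T_N(\Psi_N\Lambda_N) = (\Psi_N\Lambda_N)T_N^{-1}$, so that conjugation by the truncated binomial transform sends each $T_N$-eigenvector with eigenvalue $\lambda$ to a $T_N$-eigenvector with eigenvalue $\lambda^{-1}$. Both ingredients needed to carry this out are already in place in the excerpt.

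First, I would invoke Equation \eqref{inv}, which asserts $(\Psi_N\Lambda_N)^2 = I_N$; this is the matrix form of the binomial orthogonality relation \eqref{orthogonal} and tells us that the truncated binomial transform is an involution, with $\Psi_N^{-1} = \Lambda_N\Psi_N\Lambda_N$. Combined with the factorization $T_N = \Psi_N\Psi_N^\intercal$, this produces the identity $S_N = \Lambda_N T_N^{-1}\Lambda_N$ that the paper records just after \eqref{inv} and which is the pivotal ingredient.

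Second, I would verify the intertwining by a one-line computation:
\begin{align*}
T_N(\Psi_N\Lambda_N\vec v)
&= \Psi_N\Psi_N^\intercal\Psi_N\Lambda_N\vec v
= \Psi_N S_N\Lambda_N\vec v \\
&= \Psi_N\Lambda_N T_N^{-1}\Lambda_N^2\vec v
= \Psi_N\Lambda_N T_N^{-1}\vec v.
\end{align*}
Since $\Psi_N$ is lower triangular with ones on the diagonal it is invertible, so $T_N$ is positive definite and in particular $\lambda \ne 0$. Applying $T_N^{-1}$ to the eigenvalue relation $T_N\vec v = \lambda\vec v$ gives $T_N^{-1}\vec v = \lambda^{-1}\vec v$, and substituting into the chain above yields $T_N(\Psi_N\Lambda_N\vec v) = \lambda^{-1}(\Psi_N\Lambda_N\vec v)$, which is exactly the claim.

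There is essentially no genuine obstacle here: the orthogonality relation \eqref{orthogonal} has already done the heavy lifting by producing the involution $(\Psi_N\Lambda_N)^2 = I_N$, and the corollary amounts to the observation that this involution conjugates $T_N$ to $T_N^{-1}$. The only thing worth noting is the (trivial) verification that $\lambda \ne 0$, which is automatic from the positive definiteness of $T_N$.
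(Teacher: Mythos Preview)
Your proposal is correct and follows essentially the same argument as the paper: both use $T_N=\Psi_N\Psi_N^\intercal$ together with the identity $S_N=\Psi_N^\intercal\Psi_N=\Lambda_N T_N^{-1}\Lambda_N$ (derived from \eqref{inv}) to compute $T_N\Psi_N\Lambda_N\vec v=\Psi_N S_N\Lambda_N\vec v=\Psi_N\Lambda_N T_N^{-1}\vec v=\lambda^{-1}\Psi_N\Lambda_N\vec v$. Your version adds the explicit remark that $\lambda\neq 0$ via positive definiteness of $T_N$, which the paper leaves implicit.
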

\begin{proof}
Since $T_N=\Psi\Psi^\intercal$, from Equation \eqref{inv} we have that 
$S_N=\Psi_N^\intercal\Psi_N = \Lambda_NT_N^{-1}\Lambda_N$. Hence,
$$T_N\Psi_N\Lambda_N\vec v = \Psi_N S_N\Lambda_N\vec v = \Psi_N\Lambda_NT_N^{-1}\vec v = \lambda^{-1}\Psi_N\Lambda_N\vec v.$$
\end{proof}
\begin{remark}
This also implies $T_N$ has an eigenvalue of $1$ when $N$ is odd, corresponding to the eigenvector of $J_N$ with eigenvalue $(N^2-1)/2$.
\end{remark}

The action of the binomial transformation on the spectra of $J$ provides an interesting way of obtaining an eigenbasis of the binomial transformation.
\begin{proposition}
If $\vec v$ is an eigenvector of $J_N$ with eigenvalue $\lambda\neq (N^2-1)/2$, then $\vec v \pm \Psi_N\Lambda_N\vec v$ is an eigenvector of $\Psi_N\Lambda_N$ with eigenvalue $\pm 1$.
In  the special case $\lambda=(N^2-1)/2$, $\vec v$ is already an eigenvector of $\Psi_N\Lambda_N$ with eigenvalue $1$.
\end{proposition}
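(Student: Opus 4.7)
The plan is to leverage the fact that $M := \Psi_N\Lambda_N$ is an involution, which follows directly from Equation \eqref{inv}: $M^2 = \Psi_N\Lambda_N\Psi_N\Lambda_N = I_N$. Consequently $M$ is diagonalizable with eigenvalues in $\{+1,-1\}$, and the goal reduces to constructing its eigenvectors out of the eigendata of $J_N$ provided by Theorem \ref{T-reflection}.

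For the generic case $\lambda \neq (N^2-1)/2$, I would start from an eigenvector $\vec v$ of $J_N$ with eigenvalue $\lambda$ and set $\vec w = M\vec v$. Theorem \ref{T-reflection} identifies $\vec w$ as an eigenvector of $J_N$ with eigenvalue $N^2-1-\lambda \neq \lambda$, and since $J_N$ has simple spectrum (its subdiagonal entries $n(N-n)(N+n)$ for $1 \leq n \leq N-1$ are nonzero), the vectors $\vec v$ and $\vec w$ lie in distinct one-dimensional eigenspaces of $J_N$, hence are linearly independent; in particular $\vec v \pm \vec w \neq 0$. A one-line computation using $M^2 = I_N$ then gives
$$M(\vec v \pm \vec w) = \vec w \pm \vec v = \pm(\vec v \pm \vec w),$$
which is the desired assertion.

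For the borderline case $\lambda = (N^2-1)/2$, the symmetry in Theorem \ref{T-reflection} now sends $\vec v$ to another eigenvector of $J_N$ with the \emph{same} eigenvalue. Simplicity of the spectrum of $J_N$ forces $M\vec v = c\vec v$, and the involution property $M^2 = I_N$ forces $c = \pm 1$. It remains to rule out $c = -1$, which I expect to be the main obstacle of the argument.

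To pin down $c = +1$, I would use a trace count. Since $\Psi_N$ is lower triangular with ones on the diagonal and $\Lambda_N = \diag(1,-1,\dots,(-1)^{N-1})$, one sees immediately that $\tr(M) = \sum_{j=0}^{N-1}(-1)^j$, which equals $1$ precisely when $N$ is odd (the only case in which $(N^2-1)/2$ can actually appear as an eigenvalue of $J_N$, by the earlier corollary). The construction of the previous paragraph pairs the remaining $N-1$ eigenvalues of $J_N$ into $(N-1)/2$ pairs $\{\lambda, N^2-1-\lambda\}$ from each of which one extracts one $+1$- and one $-1$-eigenvector of $M$, contributing $0$ to the trace. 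Thus the middle eigenvector must account for the entire trace of $M$, forcing $c=+1$. Together, the two cases yield the proposition.
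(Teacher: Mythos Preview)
Your proof is correct and follows essentially the same approach as the paper: both arguments use the involution $(\Psi_N\Lambda_N)^2=I_N$ together with Theorem~\ref{T-reflection} and the simplicity of the spectrum of $J_N$ for the generic case, and both settle the borderline case $\lambda=(N^2-1)/2$ via the trace computation $\tr(\Psi_N\Lambda_N)=\sum_{k=0}^{N-1}(-1)^k=1$ after noting that the other eigenvectors pair off into $\pm 1$ contributions.
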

\begin{proof}
Suppose that $\vec v$ is an eigenvector of $J_N$ with eigenvalue $\lambda\neq (N^2-1)/2$.
Then $\Psi_N\Lambda_N\vec v$ is an eigenvector with eigenvalue different from $\lambda$, so $\vec v\pm \Psi_N\Lambda_N\vec v\neq 0$.
Moreover, $(\Psi_N\Lambda_N)^2=I_N$ implies that
$$\Psi_N\Lambda_N(\vec v\pm \Psi_N\Lambda_N\vec v) = \pm (\vec v\pm \Psi_N\Lambda_N\vec v).$$
Thus $\vec v\pm \Psi_N\Lambda_N\vec v$ is an eigenvector of $\Psi_N\Lambda_N$ with eigenvalue $\pm 1$.

Lastly, we consider the special case that $N$ is odd and $\vec v$ is an eigenvector of $J_N$ with  $\lambda = (N^2-1)/2$.
Then $\Psi_N\Lambda_N \vec v$ is also an eigenvector of $J_N$ with eigenvalue $\lambda$.
Since the spectrum of $J_N$ is simple, it follows that
$$\Psi_N\Lambda_N\vec v = c\vec v$$
for some constant $c$.
The trace of $\Psi_N\Lambda_N$, is the sum of all of the eigenvalues, which come in pairs of $+1,-1$ pairs, except for $c$.
Therefore
$$c = \tr(\Psi_N\Lambda_N) = \sum_{k=0}^{N-1} (-1)^k = 1.$$
Thus $\vec v$ is an eigenvector of $\Psi_N\Lambda_N$ with eigenvalue $1$.
\end{proof}

When $N$ is odd, an eigenvector of $T_N$ with eigenvalue $1$, or equivalently, an eigenvector of $J_N$ with eigenvalue $(N^2-1)/2$ has rational entries and a simple recursive formula defined in terms of the entries of $J_N$.
Specifically, the entries $v_0,\dots,v_{N-1}$ of an eigenvector $\vec v$ satisfy $v_0=1$, $v_1=1/2$, and
\begin{equation}
v_{k+1} = \frac{\frac{1}{2}(N^2-1)v_k+k(N^2-2k^2-3k-2)v_k - k(N^2 - k^2)v_{k-1}}{(k+1)(N^2 - (k+1)^2)} ,
\end{equation}
for $0< k < N-1$.

\sectionnew{Numerical diagonalization of the Pascal matrix}\label{num}
As stated in the introduction, the problem of diagonalizing the $N\times N$ Pascal matrix $T_N$ is numerically unstable, even for relatively low values of $N$.
In contrast, diagonalization of the matrix $J_N$ is numerically well-behaved.
Furthermore, the theory of spectra of Jacobi matrices implies that $J_N$ has simple spectrum.
This means that an eigenbasis of $J_N$ must be an eigenbasis of $T_N$ as well.
This gives us a simple and natural workaround for the diagonalization of $T_N$ that is numerically robust.

The explicit formula for the vectors of the Pascal matrix or eigenvectors of $J_N$ is unknown.
Even so, standard eigenvector and eigenvalue routines are sufficient to approximate both.
In this section we use standard LAPACK routines \cite{lapack} to calculate the eigenvectors of $T_N$, both directly from $T_N$ and alternatively from $J_N$, and compare the resultant error.
This is implemented through the NumPy Python library \cite{numpy}.

To calculate the error in the computed eigenvalues, we need to calculate the eigenvectors more precisely as well.
We use a variable precision eigenvector and eigenvalue routines implemented from the mpmath Python library \cite{mpmath} to accurately calculate the eigenvectors to $200$ decimal places.
We use this high-accuracy calculation as our ``true" eigenvectors and take the difference between the true eigenvectors and the computed eigenvectors as the error.
In each case the eigenbasis is taken to be orthonormal.
Moreover, the spectra of both $T_N$ and $J_N$ are simple, so there is no ambiguity in the eigenvector comparisons.

\begin{figure}[htp]
\begin{tabular}{c|c|c|c}
eigenvalue of $T_N$ & eigenvalue of $J_N$ & error from $J_N$ & error from $T_N$\\\hline
$1.87658533e-08$ & $-2935.4$& $8.4094e-16$& $9.075e-6$\\
$ 1.16639323e-06$ & $-2319.7$& $7.1595e-16$& $9.3672e-6$\\
$3.31357255e-05$ & $-1762.4$& $5.055e-16$& $2.6413e-6$\\
$5.67427242e-04$ & $-1262.8$& $1.0187e-15$& $6.8088e-7$\\
$6.48778221e-03$ & $-821.21$& $8.7125e-16$& $1.6673e-7$\\
$5.15247212e-02$ & $-439.43$& $3.6927e-16$& $2.5406e-8$\\
$2.80569832e-01$ & $-124.69$& $3.307e-16$& $3.6577e-9$\\
$1.00000000e+0$ & $112.0$& $2.4291e-16$& $2.5178e-9$\\
$3.56417507e+0$ & $348.69$& $6.2037e-16$& $5.4666e-10$\\
$1.94081593e+01$ & $663.43$& $7.4917e-16$& $6.8609e-11$\\
$1.54135877e+02$ & $1045.2$& $7.0933e-16$& $1.4533e-11$\\
$1.76234048e+03$ & $1486.8$& $1.0204e-15$& $1.7079e-12$\\
$3.01789077e+04$ & $1986.4$& $1.166e-15$& $1.1491e-13$\\
$8.57343794e+05$ & $2543.7$& $8.01e-16$& $7.1326e-15$\\
$5.32882775e+07$ & $3159.4$& $1.3531e-16$& $2.7205e-16$
\end{tabular}
\caption{Comparison of the $\ell^2$-norm errors in eigenvectors of $T_N$, calculated from diagonalizing $T_N$ versus diagonalizing $J_N$ for $N=15$.}
\label{fig:eigenerror}
\end{figure}

The results in the case $N=15$ are summarized in the table in Figure \ref{fig:eigenerror}.
As we see, the eigenvectors computed from $J_N$ agree with the true eigenvector up to the full double-precision accuracy.
In contrast, the eigenvectors calculated from $T_N$ demonstrate significant numerical error, especially for the eigenvalues of $T_N$ which are clustered near zero.
As $N$ increases, the error in the eigenvectors calculated from $T_N$ continues to significantly grow.
By $N=20$, the errors for the eigenvalues near zero are on the same scale as the entries of the eigenvector itself, making some eigenvectors calculated from $T_N$ essentially random.
On the other hand, the precision in the calculation of eigenvalues and eigenvectors from $J_N$ remains essentially the same.

\section*{Acknowledgements}
\thanks{The research of W.R.C. has been supported by an AMS-Simons Research Enhancement Grant, and RSCA intramural grant 0359121 from CSUF; that of I.Z. was supported 
by Ministerio de Ciencia e Innovaci\'on PID2021-124332NB-C21, Universidad de Sevilla VI PPIT-US and Oberwolfach research in pairs program.}

\end{document}